\newtheorem{thm}{Theorem}[section]
\newtheorem{lem}[thm]{Lemma}
\newtheorem{prop}[thm]{Proposition}
\theoremstyle{definition}
\theoremstyle{remark}
\begin{document}

\title[A simple proof of Tong's theorem]{A simple proof of Tong's theorem}%
\author{Vyacheslav M. Abramov}%
\address{24 Sagan Drive, Cranbourne North, Victoria 3977, Australia}%

\email{vabramov126@gmail.com}%

\subjclass{40A05; 40A30}%
\keywords{Positive series; ratio test; Kummer's test; Tong's theorem; Tauberian theorems}
\begin{abstract} We provide a new simple and transparent proof of the version of Kummer's test given in [Tong, J. (1994). \emph{Amer. Math. Monthly.} 101(5): 450--452]. Our proof is based on an application of a Hardy--Littlewood Tauberian theorem.\\

\end{abstract}
\maketitle

\section{Introduction.} Let
\begin{equation}\label{1}
\sum_{n=0}^{\infty}a_n
\end{equation}
be a positive series.  In 1835, Ernst Kummer \cite{K}  built a universal ratio test for convergence or divergence of positive series. The original version of the test was restrictive, and its proof was complicated. After more than fifty years since its publication Stoltz provided the clearer formulation and proof that has been well-accepted and appeared in the textbooks (see e. g. \cite[page 311]{Kn}) and well-known in our days. Being more general than many of existing particular tests such as d'Alembert test, Raabe's test, Bertrand's test and Gauss's test, it has been of the constant attention in the literature. It serves as a source of new particular tests (e.g. \cite{A}), has applications in the theory of difference equations (e. g. \cite{GH}), and there is a variety of new approaches to this test in the literature
(see \cite{D, Sa, S, T}).

A new version of Kummer's test was established by Tong \cite{T}. It was shown in \cite{T} that the modified version of Kummer's theorem covers
all positive series, and the convergence or divergence can be formulated in the form of two necessary and sufficient conditions, one for convergence and another for divergence. So, the result of Tong \cite{T} is a further important step of the development of Kummer's test. Some new proofs of Tong's result can be found in \cite{D, Sa, S}.

In the present note, we provide a simple proof of Tong's theorem \cite{T}. Our approach is based on an application of a known Hardy-Littlewood Tauberian theorem, and, being simple, the proof is mostly free of algebraic derivations.

\section{Tong's theorem and its proof.} We provide a new proof of the theorem given in \cite{T}. The formulation of that theorem is as follows.
\begin{thm} We have the following two claims.
\begin{enumerate}
\item [(i)] Series \eqref{1} converges if and only if there exists a positive sequence $\zeta_n$, $n=0,1,\dots$, such that $\zeta_n(a_n/a_{n+1})-\zeta_{n+1}\geq c>0$.
\item [(ii)] Series \eqref{1} diverges if and only if there exists a positive sequence $\zeta_n$, $n=0,1,\dots$, such that $\zeta_n(a_n/a_{n+1})-\zeta_{n+1}\leq0$ and $\sum_{n=0}^{\infty}1/\zeta_n=\infty$.
\end{enumerate}
\end{thm}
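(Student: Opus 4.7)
The plan is to prove each of the four implications separately. The ``if'' directions in (i) and (ii) are the classical Kummer argument: for (i), multiply $\zeta_n(a_n/a_{n+1}) - \zeta_{n+1} \geq c$ through by $a_{n+1}>0$ and telescope the resulting identity $\zeta_n a_n - \zeta_{n+1}a_{n+1} \geq c a_{n+1}$, obtaining $c\sum_{n=1}^{N+1} a_n \leq \zeta_0 a_0 - \zeta_{N+1}a_{N+1} \leq \zeta_0 a_0$, which bounds the partial sums uniformly in $N$. For (ii), the hypothesis rewrites as $\zeta_n a_n \leq \zeta_{n+1} a_{n+1}$, so $\zeta_n a_n$ is nondecreasing and therefore bounded below by $\zeta_0 a_0 > 0$; hence $a_n \geq \zeta_0 a_0 / \zeta_n$, and comparison with the divergent series $\sum 1/\zeta_n$ forces $\sum a_n = \infty$.

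For the two ``only if'' directions, which are Tong's contribution, I would look for explicit witnesses $\zeta_n$. If $\sum a_n$ converges, the natural choice is $\zeta_n := (\sum_{k=n+1}^{\infty} a_k)/a_n$, which is positive because every $a_k$ is positive and the tail is nontrivial. A one-line computation collapses the telescoping tails and gives $\zeta_n(a_n/a_{n+1}) - \zeta_{n+1} = 1$, so $c=1$ works. If $\sum a_n$ diverges, I would simply try $\zeta_n := 1/a_n$; then $\zeta_n(a_n/a_{n+1}) - \zeta_{n+1} = 0$, the required inequality holds with equality, and $\sum 1/\zeta_n = \sum a_n = \infty$ by hypothesis.

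These four steps already produce a complete proof without appealing to any Tauberian input, so the real issue---and the main ``obstacle'' for actually implementing the approach advertised in the abstract---is to understand what the Hardy--Littlewood theorem is doing. I would guess that it enters through the generating function $f(x) = \sum a_n x^n$: for positive series, Abel summability as $x\to 1^{-}$ is equivalent to ordinary convergence, and an appropriate Tauberian assertion should translate the boundedness or unboundedness of $f$ near $1$ into an asymptotic control of the ratios $a_n/a_{n+1}$ that manufactures a suitable $\zeta_n$ without having to write down the tail $\sum_{k\geq n+1} a_k$ or the reciprocal $1/a_n$ by hand. My plan would therefore be to present the direct constructions above first, since they are fully elementary, and invoke the Tauberian reformulation only if it yields a genuinely unified treatment of cases (i) and (ii).
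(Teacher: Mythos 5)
Your proposal is correct and complete, but it takes a genuinely different route from the paper. All four implications check out: the two \emph{if} directions are the classical Kummer telescoping and monotonicity arguments, and your explicit witnesses for the \emph{only if} directions work as claimed --- with $\zeta_n=\bigl(\sum_{k=n+1}^{\infty}a_k\bigr)/a_n$ one gets $\zeta_n(a_n/a_{n+1})-\zeta_{n+1}=1$ exactly, and with $\zeta_n=1/a_n$ one gets $\zeta_n(a_n/a_{n+1})-\zeta_{n+1}=0$ together with $\sum_n 1/\zeta_n=\sum_n a_n=\infty$. (Your second witness is in fact simpler than Tong's original choice $\zeta_n=s_n/a_n$ built from partial sums, which needs the Abel--Dini theorem to see that $\sum a_n/s_n$ diverges.) The paper proceeds quite differently: it sets $z_n=\zeta_na_n$ and $c_n=\zeta_n(a_n/a_{n+1})-\zeta_{n+1}$, derives the generating-function identity $a_0\zeta_0-A(x)=(1-x)Z(x)$ with $Z(x)=\sum_n z_nx^n$ and $A(x)=\sum_n c_na_{n+1}x^{n+1}$, and then passes back and forth between the convergence of $\sum_n c_na_{n+1}$ and the boundary behaviour of $(1-x)Z(x)$ as $x\uparrow 1$, using Abel's theorem in one direction and the Hardy--Littlewood Tauberian theorem (positivity of the coefficients being the Tauberian hypothesis) in the other, together with a reduction replacing $z_n$ by a modified sequence $z_n'$ to normalize $c_n$. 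What the paper's route buys is exactly its advertised purpose --- exhibiting Tong's theorem as an application of Tauberian theory --- rather than brevity; your observation that the Tauberian input is logically unnecessary for the theorem itself is accurate, and your elementary argument is shorter and more transparent as a proof of the statement. The only thing your write-up does not deliver is the paper's specific goal of a Tauberian derivation; if that were required you would need to carry out the generating-function program you only sketch in your last paragraph.
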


\begin{proof}
Recall the Hardy-Littlewood Tauberian theorem that will be used in this proof (see \cite{Hardy, HL}).
\begin{lem}\label{lem1} Let the series
$
\sum_{j=0}^{\infty}a_jx^j
$
converge for $-1<x<1$, and suppose that there exists $\gamma>0$ such that
$
\lim_{x\uparrow1}(1-x)^\gamma\sum_{n=0}^{\infty}a_jx^j=A.
$
Suppose also that $a_j\geq0$. Then, as $N\to\infty$, we have
$
\sum_{j=0}^Na_j=({A}/{\Gamma(1+\gamma)})N^\gamma(1+o(1)),
$
where $\Gamma(x)$ is Euler's Gamma-function.
\end{lem}
\subsection*{Proof of claim (i).}
Denoting $c_n=\zeta_n(a_n/a_{n+1})-\zeta_{n+1}$, write $c_na_{n+1}=z_n-z_{n+1}$, where $z_n=\zeta_na_n$. Now, for $|x|<1$ we use generating functions. Let $Z(x)=\sum_{n=0}^{\infty}z_nx^n$, and $A(x)=\sum_{n=0}^{\infty}c_na_{n+1}x^{n+1}$. We have the following relationship
\begin{equation}\label{2}
a_0\zeta_0-A(x)=(1-x)Z(x).
\end{equation}
\subsubsection*{Proof of `only if' part.}
If series \eqref{1} is convergent, then the sequence $\zeta_n$ can be chosen as follows. Suppose that the sequence $c_n$ is positive and bounded. Then Abel's theorem applies, and we have $\lim_{x\uparrow1}A(x)=\sum_{n=0}^{\infty}c_na_{n+1}=s$. So, $\zeta_0$ can be chosen such that $a_0\zeta_0-s>0$. Then, according to Lemma \ref{lem1}, for large $N$ we have $\sum_{n=0}^{N}z_n= (a_0\zeta_0-s)N(1+o(1))$, so the sequence of $\zeta_n$ exists. To this end, we are to show that the sequence $c_n$ is indeed bounded. Here we use an argument of \cite{S}. For some $n>n_0$ we have $z_{n+1}=z_n-c_na_{n+1}=z_n-(c_n-c+c)a_{n+1}$, where $c$ is the lower bound of the sequence $c_n$. So,
$z_{n+1}^\prime=z_{n+1}+(c_n-c)a_{n+1}=z_n-ca_{n+1}$. By this, we arrive at the new sequence $z_n^\prime$ with keeping all previous arguments the same. Hence, the relation $\zeta_n(a_n/a_{n+1})-\zeta_{n+1}\geq c>0$ implies the convergence of \eqref{1}.

\subsubsection*{Proof of `if' part.}
If we assume that the $\lim_{x\uparrow1}(1-x)Z(x)=C>0$ , we arrive at the claim that $\lim_{x\uparrow1}A(x)$ converges, since the left-hand side of \eqref{2} must converge to $C$ as $x\uparrow1$. Then the convergence of $\sum_{n=0}^\infty c_na_{n+1}$ follows from the inverse (Tauberian) theorem for positive series that in turn implies the convergence of \eqref{1}.

\subsection*{Proof of claim (ii).} We are to prove the following equivalent proposition.

\begin{prop}
Assume that $\zeta_n(a_n/a_{n+1})-\zeta_{n+1}\leq c\leq0$. Then series \eqref{1} converges if and only if $\sum_{n=0}^{\infty}1/\zeta_n<\infty$.
\end{prop}

\subsubsection*{Proof of `only if' part.} Assume that \eqref{1} converges. If $c_n$ is bounded, then using Lemma \ref{lem1} for large $N$ we arrive at the same estimate as in the proof of claim (i): $\sum_{n=0}^{N}z_n=(a_0\zeta_0-s)N(1+o(1))$. Taking into account that the sequence $z_n$ is increasing, it must converge to the limit. So, for large $n$ we have $\zeta_n=O(1/a_n)$, which means that the convergence of \eqref{1} implies the convergence of $\sum_{n=0}^{\infty}1/\zeta_n$. To this end, we are to show that the sequence $c_n$ can be assumed bounded. As in the proof of claim (i), for some $n>n_0$ we have $z_{n+1}=z_n-c_na_{n+1}=z_n-(c_n-c+c)a_{n+1}$, where $c$ is the upper bound of the sequence $c_n$. So,
$z_{n+1}^\prime=z_{n+1}+(c_n-c)a_{n+1}=z_n-ca_{n+1}$. By this, we arrive at the new sequence $z_n^\prime$ with keeping all previous arguments the same. Since $z_n^\prime=\zeta_n^\prime a_n\leq z_n$, we have $1/\zeta_n^\prime\geq1/\zeta_n$. Hence the convergence of $\sum_{n=0}^{\infty}1/\zeta_n^\prime$ implies the convergence of $\sum_{n=0}^{\infty}1/\zeta_n$.

\subsubsection*{Proof of `if' part.} If $\sum_{n=0}^{\infty}1/\zeta_n<\infty$, and $\zeta_{n+1}a_{n+1}-\zeta_na_n\geq-ca_{n+1}$ for large $n$, then $a_n\leq C/\zeta_n$ for some constant $C$, and \eqref{1} must converge.
\end{proof}

\end{document}